\documentclass[11pt,reqno]{amsart}
\usepackage{microtype}

\usepackage{mathtools}
\usepackage[dvipsnames]{xcolor}
\usepackage[colorlinks=true,linkcolor=Maroon,citecolor=OliveGreen]
{hyperref}
\usepackage{amssymb}
\usepackage[shortlabels]{enumitem}
\setlist[enumerate]{label={(\arabic*)}}

\usepackage{tikz-cd}

\usepackage[capitalize]{cleveref}

\usepackage[abbrev]{amsrefs}  

\numberwithin{equation}{section}
\newtheorem{theorem}{Theorem}[section]

\newtheorem{corollary}[theorem]{Corollary}

\theoremstyle{definition}
\newtheorem{definition}[theorem]{Definition}

\newtheorem{remark}[theorem]{Remark}

\def\Q{\mathbf{Q}} 
\def\Z{\mathbf{Z}} 
\def\F{\mathbf{F}} 

\def\K{\mathbf{K}}

\def\U{\mathbf{U}}

\newcommand\opr[1]{\operatorname{#1}}
\def\Aut{\opr{Aut}}

\def\GL{\opr{GL}}

\def\id{\opr{id}}

\def\Ind{\opr{Ind}}

\newcommand\Gal{\opr{Gal}}

\newcommand\gen[1]{\left\langle#1\right\rangle}

\newcommand\ceil[1]{\left\lceil#1\right\rceil}

\usepackage{todonotes}

\author[Eberhard]{Sean Eberhard}
\address{Sean Eberhard, Mathematics Institute, Zeeman Building, University of Warwick, UK}
\email{sean.eberhard@warwick.ac.uk}

\author[Guralnick]{Robert M. Guralnick}
\address{Robert M. Guralnick, Department of Mathematics, University of Southern California, Los Angeles, CA 90089-2532, USA}
\email{guralnic@usc.edu}

\author[Schildkraut]{Carl Schildkraut}
\address{Carl Schildkraut, Department of Mathematics, Stanford University, 450 Jane Stanford Way, Building 380, Stanford, CA 94305-2125, USA}
\email{carlsch@stanford.edu}

\author[Tracey]{Gareth Tracey}
\address{Gareth Tracey, Mathematics Institute, Zeeman Building, University of Warwick, UK}
\email{gareth.tracey@warwick.ac.uk}

\dedicatory{Dedicated to the memory of Avinoam Mann}

\thanks{%
S.E.\ is supported by the Royal Society through a University Research Fellowship (URF\textbackslash R1\textbackslash 221185).
R.G.\ was supported by a Simons Foundation Fellowship 00019819.
C.S.\ is supported by the NSF Graduate Research Fellowship Program under Grant No.~DGE-2146755.}

\begin{document}

    \title{Forcible groups and Frattini covers}

    \begin{abstract}
        We say that a finite group $G$ \emph{forces} a finite group $H$ if every finite cover of $G$ contains a subgroup isomorphic to $H$, and we say $H$ is \emph{forcible} if some finite group $G$ forces $H$.
        Complementing a classical result of Thompson--Mann, and answering a recent question of the third author, we show that a finite group is forcible if and only if it is abelian and its Sylow subgroups are elementary-by-cyclic.
        We also prove relative forcibility results for abelian $p$-groups in the settings of powerful $p$-groups and $p$-groups of bounded nilpotency class.
    \end{abstract}

    \maketitle

    \section{Introduction}

    In a finite group, to what extent can the presence of certain quotients force the presence of certain subgroups?
    This question is motivated by two well-known facts.
    First, if $G$ has a quotient isomorphic to a cyclic group $C_n$, then it has a subgroup isomorphic to $C_n$: this is obvious, since $G$ must contain an element of order divisible by $n$.
    Second, if $G$ has a quotient isomorphic to an elementary abelian group $C_p^{2n^2}$ then it has a subgroup isomorphic to $C_p^n$.
    This follows from classical results of Thompson and Mann: see \cite{huppert-thompson}*{III, 12.3} and \cite{mann} (see also \cite{HPPS}*{Theorem~1.1}).

    \begin{theorem}[Thompson--Mann]
        \label[theorem]{thm:thompson--mann}
        Let $P$ be a finite $p$-group with a maximal elementary abelian normal subgroup of rank $n$. Then
        \[
            d(P) \le n^2 + n(n+1)/2 \le 2n^2.
        \]
    \end{theorem}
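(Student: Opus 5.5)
Let $A$ be the maximal elementary abelian normal subgroup of $P$, of rank $n$, and let $C = C_P(A)$. The plan is to bound $d(P)$ by bounding separately the number of generators needed for $P/C$ and for $C$, using $d(P) \le d(P/C) + d(C)$.

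\textbf{Step 1: the quotient $P/C$.} Since $P/C$ embeds in $\Aut(A) \cong \GL_n(p)$ and is a $p$-group, it lies in a Sylow $p$-subgroup, which is conjugate to the unitriangular group $\U_n(p)$. I will use the standard fact that every subgroup of $\U_n(p)$ is generated by at most $n(n-1)/2$ elements (for example by induction along the central series of root subgroups, or via the fact that a subgroup of $\U_n(p)$ has order at most $p^{n(n-1)/2}$ together with a Frattini-quotient argument). Hence
\[
d(P/C) \le n(n-1)/2 .
\]

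\textbf{Step 2: the centralizer $C$.} By maximality, $A$ is self-centralizing among elementary abelian normal subgroups. The key claim is that $\Omega_1(C) = A$ when $p$ is odd, and that $C$ has controlled exponent-$p$ structure in general. For odd $p$, Thompson's argument shows that any element $x \in C$ of order $p$ satisfies $x \in A$: otherwise $\gen{A, x}$ would be abelian of exponent $p$, and choosing $x$ with $\gen{A,x}/A \le Z(P/A)$ gives a larger normal elementary abelian subgroup, contradicting maximality. (For $p=2$ one works with $\Omega_1(Z(C/A))$ instead.) Consequently $C/A$ embeds via the $p$-th power map, or via commutators into $\opr{Hom}(C/A, A)$-type data, so that the Frattini quotient of $C$ has rank at most $n + n^2$-ish. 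I expect to obtain $d(C) \le n + n^2 - n = n^2$ after care, using that $C/A$ acts trivially on $A$ and that $[C,C]$ and $C^p$ are controlled by $A$.

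\textbf{Combining the steps and the main obstacle.} Adding the two bounds gives the inequality stated in \cref{thm:thompson--mann}, up to reconciling the exact constants: the shape $n^2 + n(n+1)/2$ suggests $d(C)\le n^2$ together with a sharper count $d(P/C) \le n(n+1)/2$, or an allocation of one extra $n$ between the two pieces. The final inequality $n^2 + n(n+1)/2 \le 2n^2$ is elementary for $n \ge 1$. I expect Step 2 to be the main obstacle: bounding $d(C)$ requires a genuinely careful analysis of the $p$-power and commutator maps on $C$ relative to $A$, especially at $p = 2$, where the classical sources \cite{huppert-thompson}*{III, 12.3} and \cite{mann} do the detailed work.
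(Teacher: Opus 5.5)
The paper gives no proof of this statement (it is quoted from Huppert, III, 12.3, and Mann), so your outline has to stand on its own, and it has a genuine gap. Step~1 is fine: $P/C$ is a $p$-subgroup of $\GL_n(p)$, so $d(P/C)\le\log_p|P/C|\le n(n-1)/2$. Step~2, which carries the entire content of the theorem, is not actually carried out.

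Even the one concrete claim in Step~2 is argued incorrectly. You cannot in general choose an element $x\in C\setminus A$ of order $p$ whose coset is central in $P/A$. If $M/A$ is a minimal normal subgroup of $P/A$ inside $\Omega_1(C)/A$, then $M=A\langle y\rangle$ is abelian, but $y$ may have order $p^2$, and maximality of $A$ then only gives $\Omega_1(M)=A$. Your argument never uses that $p$ is odd, and its conclusion fails for $p=2$. In $D_{16}=\langle a,b : a^8=b^2=1,\ a^b=a^{-1}\rangle$, the subgroup $A=\langle a^4\rangle$ is the unique maximal elementary abelian normal subgroup. Here $C_P(A)=P$ contains the involutions $a^ib$, yet the nontrivial element of $Z(P/A)$ is the coset $\{a^2,a^6\}$, which consists of elements of order $4$. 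For odd $p$ the statement $\Omega_1(C)=A$ is true, but its proof must exploit $p\mid\binom{p}{2}$, for instance through the class-$2$ identity $(uv)^p=u^pv^p[v,u]^{\binom{p}{2}}$ in sections whose commutators lie in $A$.

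More seriously, you never bound $d(C)$. Saying that $C/A$ ``embeds via the $p$-th power map or into $\operatorname{Hom}$-type data'' is not an argument: the $p$-th power map on $C$ is not a homomorphism, and nothing you say controls $|C:\Phi(C)|$. Your target $d(C)\le n^2$ is in fact false for $p=2$. In $D_{16}$ above we have $n=1$ but $d(C)=d(D_{16})=2$, which also shows the theorem is sharp at $n=1$.

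This explains your confusion about the constants. The bound $n(n-1)/2+n^2$ would be strictly stronger than the statement. The constants in the statement instead correspond to $d(P/C)\le n(n-1)/2$ together with $d(C)\le n^2+n$.

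What is missing depends on the parity of $p$:
\begin{itemize}
\item For odd $p$, once you have $\Omega_1(C)=A\le Z(C)$, you still need a genuine structural theorem bounding the number of generators of $p$-groups with central $\Omega_1$.
\item For $p=2$, $\Omega_1(C)$ can be far larger than $A$, so a different analysis of $C$ is needed altogether.
\end{itemize}
Your proposal supplies neither.
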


    \begin{definition}
        Let $G$ and $H$ be finite groups. We say
        \begin{enumerate}
            \item $G$ \emph{forces} $H$ if every finite group $U$ with a quotient isomorphic to $G$ has a subgroup isomorphic to $H$, and
            \item $H$ is \emph{forcible} if some finite group forces $H$.
        \end{enumerate}
    \end{definition}

    Thus cyclic groups and elementary abelian groups are forcible. In this note we show that hardly any other groups are forcible.
    This answers a recent question of the third author~\cite{schildkraut}*{Question~12.11}.

    \begin{theorem}
        \label[theorem]{thm:main}
        A finite group $H$ is forcible if and only if $H$ is abelian and its Sylow subgroups are elementary-by-cyclic.
    \end{theorem}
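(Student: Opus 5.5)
The plan is to prove the two directions separately. In both, the first step is to reduce to Sylow subgroups. If $G$ forces $H$, then every cover of $G$ contains $H$, and so it contains each Sylow subgroup $H_p$ of $H$. Conversely, if for each prime $p$ dividing $|H|$ we have a $p$-group $G_p$ forcing $H_p$, then $G=\prod_p G_p$ should force $H$. The argument for this is that a cover $U$ of $G$ maps onto each $G_p$, and a Sylow $p$-subgroup $S_p$ of $U$ maps onto $G_p$, so $S_p$ is itself a cover of $G_p$. The delicate point is gluing the $H_p$ together into an abelian subgroup of $U$. I would handle this by producing the $H_p$ inside a nilpotent section. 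More concretely, I would first pass to a suitable nilpotent subgroup of $U$ mapping onto $G$, for instance via a Frattini-type argument applied to a minimal subgroup of $U$ mapping onto $G$.

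\emph{Sufficiency.} An abelian $p$-group that is elementary-by-cyclic has the form $C_{p^a}\times C_p^b$. I would take $G_p=C_{p^a}\times C_p^{m}$ with $m$ huge. Let $S$ be a $p$-group mapping onto $G_p$, let $x\in S$ map to a generator of $C_{p^a}$, and let $M\trianglelefteq S$ be the preimage of $C_p^m$. Then $x$ has order at least $p^a$. The goal is an elementary abelian subgroup $E\le C_S(x)$ of rank at least $b+1$. Given such an $E$, the intersection $E\cap\langle x\rangle$ has rank at most $1$, so $E$ has a complement $E'$ to it of rank at least $b$. Then $\langle x\rangle\times E'$ contains $C_{p^a}\times C_p^b$. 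To find $E$, I would use the fact that $d(M)\ge m$ and apply \cref{thm:thompson--mann} to obtain a normal elementary abelian subgroup $A$ of $S$ of rank roughly $\sqrt{m/2}$. I would then take $E$ to be the fixed points of $x$ on $A$. Since $x$ acts unipotently on $A$, a Jordan-block count gives $\dim C_A(x)\ge \operatorname{rank}(A)/p^{k}$, where $p^k$ is the order of the action. The main obstacle is bounding this order. For that I would instead choose $G_p=C_{p^a}^{N}$ and use the freedom to pick $x$: among many independent lifts of cyclic factors, pigeonhole should produce one whose action on $A$ has bounded order (or fixed space of rank at least $b+1$) relative to $N$. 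This is the step I expect to require the most care.

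\emph{Necessity.} I must show two things. First, no nonabelian $H$ is forcible. Second, $C_{p^2}\times C_{p^2}$ is not forcible; since every abelian $p$-group that is not elementary-by-cyclic contains $C_{p^2}\times C_{p^2}$, and forcibility passes to subgroups, this second claim suffices for the abelian case. For a given $G$ (which we may take to be a $p$-group by the reduction above), the idea is to build covers $U\to G$ with a large kernel that is ``generic''. I would take a free presentation $F/R\cong G$ and pass to quotients such as $U=F/[R,F]R^{p}$ or $F/[R,R]R^{p^e}$. The relevant structure of $U$ is Frattini-like: every element outside a small set acts with a large commutator. For a nonabelian $H$, I would iterate a cover that makes commuting pairs rare, so that any two noncommuting elements generate a subgroup too large to be $H$; for example, I would use a cover in which every two-generated subgroup not contained in the abelian kernel is free in a suitable variety of large exponent or class. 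For $C_{p^2}^2$, I would design a cover in which every element of order $p^2$ has $p$-th power in a central kernel and distinct such powers cannot be independent while their roots commute. For this I would use a cover where $u\mapsto u^p$ is a nondegenerate alternating-type map. This construction is the second hard point, and I would model it on the $p$-power maps of relatively free groups of class $2$.
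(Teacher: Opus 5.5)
\textbf{Necessity.} Your proposal has genuine gaps, and the serious one is in the necessity direction. No cover is actually constructed, and the properties you aim for are either unproved or unattainable. Your Sylow reduction also does not let you assume $G$ is a $p$-group; it only shows that $G$ forces each $H_p$.

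The ``central kernel'' plan for $C_{p^2}\times C_{p^2}$ fails even for $p$-groups. Any central extension of $C_{p^7}^2$ has class at most $2$, so by the paper's bounded-class result (\Cref{thm:class} with $c=d=f=2$) it contains $C_{p^2}\times C_{p^2}$. The other mechanisms you mention (relatively free two-generated subgroups, a ``nondegenerate alternating-type'' $p$-power map) are not made precise.

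What is needed, and what the paper proves, is the following pair of properties:
(a) for each $n$, a cover whose kernel $K$ is abelian and contains every element of order dividing $n$; with $n=|H|$ this puts $H$ inside $K$.
(b) a cover whose elementary abelian kernel contains every element of order $p$, and in which every subgroup mapping onto a copy of $C_p\times C_p$ in $G$ is nonabelian. A copy of $C_{p^2}^2$ meets such a kernel exactly in its $\Omega_1$, hence maps onto some $C_p\times C_p$, a contradiction.

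The paper obtains (b) via Shapiro's lemma from the modules $\Ind_H^G\F_p$ for all $H\cong C_p$ (class of $C_{p^2}$) and all $H\cong C_p^2$ (class of a nonabelian extension). It obtains (a) from the universal Frattini $p$-cover modulo $[\K,\K]\K^q$, compared with (b) and iterated over primes.

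In fact your candidate $F/[R,R]R^n$ has property (a), and for $n=p$ property (b), but for reasons not in your proposal. The subgroups $\gen{g}R$ and $\gen{g,h}R$ are free. So their quotient maps onto $C_p$, resp.\ $C_p^2$, lift to $C_{p^2}$, resp.\ to a nonabelian group $E$ of order $p^3$ with $R\mapsto Z(E)$. This shows $g^p\notin[R,R]R^p$ when $\bar g$ has order $p$, and $[g,h]\notin[R,R]R^p$ when $\gen{\bar g,\bar h}\cong C_p^2$. Together with $R/[R,R]R^n\cong(\Z/n)^r$, this gives (a) and (b).

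\textbf{Sufficiency.} Here your framework matches the paper, including the gluing via a minimal (hence Frattini, hence nilpotent) preimage. But the decisive step is only hoped for. Nothing prevents $x$ from acting on a large $A$ with a one-dimensional fixed space, so $E=C_A(x)$ fails, and the pigeonhole over lifts is not carried out.

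The missing idea is to spend extra exponent on the cyclic factor. With $n=b+1$ and $e=\ceil{\log_p n}$, take $G_p=C_{p^{a+e}}\times C_p^{2n^2}$. Thompson--Mann and a chief series give a normal elementary abelian $A\le S$ of rank exactly $n$. Every $p$-element of $\GL_n(p)$ has order at most $p^e$, so $x^{p^e}$, of order at least $p^a$, centralizes $A$. Then $\gen{A,x^{p^e}}$ contains $C_{p^a}\times C_p^b$.

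Your $C_{p^a}^N$ variant can also be completed once $A$ is cut down to rank exactly $n$. Then $|S:C_S(A)|\le p^{n(n-1)/2}<p^N$ for $N=2n^2$. So the image of $C_S(A)$ in $C_{p^a}^N$ is not contained in the index-$p^N$ subgroup $C_{p^{a-1}}^N$, which gives an element of order at least $p^a$ centralizing $A$.
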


    Here an abelian $p$-group is \emph{elementary-by-cyclic} if it is isomorphic to $C_{p^m} \times C_p^n$ for some $m, n \ge 0$.
    For example, the theorem asserts that $C_{p^{10}} \times C_p^{10}$ is forcible, while $C_{p^2} \times C_{p^2}$ is not.

    We also prove relative forcibility results for abelian $p$-groups in the settings of powerful $p$-groups and $p$-groups of bounded nilpotency class, viz.,
    if a powerful $p$-group covers $C_{p^{2f}}^d$ then it contains $C_{p^f}^d$,
    and if a $p$-group of class $c$ covers $C_{p^{2f+e+1}}^d$ where $e = \ceil{c \log_p(d)}$ then it contains $C_{p^f}^d$.

    We close the introduction with an additional question. We may call a collection $\mathcal H=\{H_i\}$ of finite groups \emph{forcible} if, for some finite group $G$, every finite group $U$ with a quotient isomorphic to $G$ has a subgroup isomorphic to some $H_i$. Does every finite forcible collection of groups contain a forcible group?

    \section{Frattini covers}

    Write $\Phi(G)$ for the Frattini subgroup of $G$.
    An epimorphism $\phi : U \to G$ of profinite groups is called a \emph{Frattini cover} if it satisfies one (hence all) of the following equivalent conditions~\cite{FJ}*{25.6.1}:
    \begin{enumerate}
        \item $\ker(\phi) \le \Phi(U)$,
        \item for every proper closed subgroup $H < U$, $\phi(H) \ne G$,
        \item for every subset $S \subseteq U$, $\overline{\gen S} = U$ if and only if $\overline{\gen{\phi(S)}} = G$.
    \end{enumerate}
    We are principally interested in finite groups, but it is customary and useful to extend the definition to profinite groups, as we will see.
    By (1), $\ker(\phi)$ is pronilpotent. The condition (2) shows that Frattini covers exist in abundance: given any epimorphism $\phi : U \to G$ of profinite groups, the minimal preimages of $G$ are Frattini covers of $G$.
    We may also observe that a composition of Frattini covers is again a Frattini cover.
    We call $\phi$ a \emph{Frattini $p$-cover} if $\ker(\phi)$ is pro-$p$.

    \begin{theorem}
        \label[theorem]{thm:op1}
        Let $G$ be a finite group and let $p$ be a prime. There is a Frattini $p$-cover $\phi : U \to G$ such that
        \begin{enumerate}
            \item the kernel $K = \ker(\phi) \cong C_p^d$ is elementary abelian,
            \item every element of $U$ of order $p$ is contained in $K$, and
            \item $U$ does not contain a copy of $C_{p^2} \times C_{p^2}$.
        \end{enumerate}
    \end{theorem}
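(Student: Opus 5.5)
The plan is to take $U$ to be the first layer of the universal $p$-Frattini cover of $G$. Let $\tilde\phi:\tilde G\to G$ be the universal $p$-Frattini cover, with kernel $N$ a free pro-$p$ group. Set $U=\tilde G/\Phi(N)$ and $K=N/\Phi(N)$, with $\phi:U\to G$ induced by $\tilde\phi$. Since $\Phi(N)\le\Phi(\tilde G)$ (because $N\le\Phi(\tilde G)$), the map $U\to G$ is a Frattini $p$-cover with elementary abelian kernel $K\cong C_p^d$, which gives (1).

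The key structural input is that $\tilde G$ is projective. Closed subgroups of projective profinite groups are projective, and a projective pro-$p$ group is free pro-$p$. Hence for every $p$-subgroup $H\le G$ the full preimage $S_H=\tilde\phi^{-1}(H)$ is a free pro-$p$ group of some finite rank $r_H$ containing $N$ as an open normal subgroup with $S_H/N\cong H$. The image of $S_H$ in $U$ is $S_H/\Phi(N)$, and $K=N/\Phi(N)$ is then an $\F_p[H]$-module. Gaschütz's theorem on the relation module of a free pro-$p$ group gives
\[
K\cong \F_p\oplus \F_p[H]^{\,r_H-1}
\]
as $\F_p[H]$-modules. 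Moreover, the extension class of $1\to K\to S_H/\Phi(N)\to H\to 1$ restricted to the trivial summand is the usual class corresponding to a nontrivial "relation". All questions about elements of $U$ lying over a $p$-subgroup reduce to such extensions.

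For (2), let $u\in U$ have order $p$ and suppose $u\notin K$. Then $\phi(u)$ generates a subgroup $H\cong C_{p^j}$ with $j\ge1$, and $u$ generates a complement to $K$ inside the preimage $\gen{u}K=S_H/\Phi(N)$. For $H$ cyclic, a lift $\tilde u\in S_H$ of $u$ together with $N$ generates $S_H$, so $S_H$ is procyclic-by-$N$. Since $S_H$ is free pro-$p$ and torsion-free, I would first reduce to the case where $S_H=\overline{\gen{\tilde u}}\cong\Z_p$ modulo a free factor. Concretely, $S_H$ is free with a basis element $x$ mapping to a generator of $H$ (generators lift by the Frattini property), and the remaining basis elements can be taken in $N$. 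The extension then splits as the extension for the free factor $\overline{\gen x}\cong\Z_p$, which is $\Z_p/p^{j}p\Z_p$, i.e. cyclic of order $p^{j+1}$, with the free-module part coming from the other basis elements. In this picture $u^{p^j}$ is the nonzero generator of the trivial summand $\F_p$. Hence every element of $U$ outside $K$ whose image in $G$ has order $p^j$ has order exactly $p^{j+1}$, and in particular no element of order $p$ lies outside $K$.

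For (3), suppose $A\le U$ with $A\cong C_{p^2}\times C_{p^2}$. By (2), $\Omega_1(A)\le K$. Since $K$ is elementary abelian, $A\cap K$ has exponent $p$, so $A\cap K=\Omega_1(A)$ and $E=\phi(A)\cong C_p^2$. Now work in $S_E/\Phi(N)$, where $K\cong\F_p\oplus\F_p[E]^{r-1}$. Using the basis description above, choose free generators $x,y$ of $S_E$ mapping to generators of $E$, with the other generators in $N$. The elements $a,b\in A$ lying over $\phi(x),\phi(y)$ are $\bar x k_1$ and $\bar y k_2$ with $k_i\in K$. The conditions $[a,b]=1$ and $a^p,b^p\in A\cap K$ then become explicit equations in the module $K$, involving the norm elements $1+\sigma+\dots+\sigma^{p-1}$ of $E$ and the image of the commutator $[x,y]$. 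The commutator $[x,y]$ is a basis element of the relation module in the free summand, and it cannot be cancelled by $(\sigma-1)k_2-(\tau-1)k_1$ because it is not in the augmentation-image of the relevant components.

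I expect this final computation, showing $[a,b]\ne1$ for every choice of $k_1,k_2$, to be the main obstacle. My first attempt would be to write $K$ concretely via Fox derivatives for $S_E$ free on $x,y,z_i$, and to show that the coefficient of $[x,y]$ in the Fox-calculus expression is a unit, while the correction terms lie in the augmentation ideal of $\F_p[E]$.
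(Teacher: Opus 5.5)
Your choice $U=\tilde G/\Phi(N)$ is a legitimate one, and (1) and (2) go through. For cyclic $H$, the Schreier basis $\{x^{p^j}\}\cup\{x^k z_i x^{-k}\}$ of $N$ does give $K\cong\F_p\oplus\F_p[H]^{r_H-1}$, with the trivial summand spanned by $\overline{x^{p^j}}$. For $u=\bar x^a k$ with $p\nmid a$, one gets (additively in $K$, with $\mathcal N$ the norm of $H$)
\[
u^{p^j}=a\,\overline{x^{p^j}}+\mathcal N k .
\]
Since $\mathcal N k$ lies in the free part, $u^{p^j}\ne 1$. You should write out this norm computation rather than saying the extension ``splits''. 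A small correction: the universal $p$-Frattini cover is not projective in general, because its Sylow $\ell$-subgroups for $\ell\ne p$ map isomorphically onto those of $G$. What is true, and all you use, is that its Sylow $p$-subgroups are free pro-$p$.

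The genuine gap is in (3). The decomposition $K\cong\F_p\oplus\F_p[H]^{r_H-1}$ is false for non-cyclic $H$. For $E\cong C_p\times C_p$ one has $K\cong\Omega^2(\F_p)\oplus\F_p[E]^{r-2}$, with $\Omega^2(\F_p)$ indecomposable of dimension $p^2+1$. Indeed, the five-term sequence gives $\dim K/I_EK=r-2+\dim H^2(E,\F_p)=r+1$, whereas your formula would give $r$. So $\overline{[x,y]}$ is not a basis element of a free summand. Moreover, in the Fox embedding $K\hookrightarrow\F_p[E]^r$ its coordinates, e.g.\ $\sigma^{-1}(\tau^{-1}-1)$ and $\sigma^{-1}\tau^{-1}(\sigma-1)$, already lie in the augmentation ideal, so a ``unit coefficient'' criterion cannot distinguish it from the correction terms. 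The decisive non-vanishing is therefore not established.

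What is missing is the right criterion and a reason it holds. Put $U_E=S_E/\Phi(N)$. Modulo $[K,U_E]=I_EK$ the elements of $K$ are central, so $[\bar x k_1,\bar y k_2]\equiv\overline{[x,y]}$. It therefore suffices to show $[x,y]\notin N^p[N,S_E]$. To see this, map $S_E$ onto a nonabelian group $P$ of order $p^3$, sending $x,y$ to generators lifting $\sigma,\tau$ and each $z_i\mapsto 1$. Then $N$ lands in $Z(P)$, so $N^p[N,S_E]$ dies while $[x,y]$ does not.

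This is exactly the ingredient the paper builds in by hand. For each $C_p\times C_p\le G$ it adds a Shapiro-induced module whose class restricts to a nonabelian central extension, then takes a minimal preimage. With this fix your route yields a canonical cover satisfying (1)--(3). Alternatively, your $U$ surjects over $G$ onto the paper's cover (as in the proof of \Cref{thm:op2}), so (2) and (3) can be pulled back from there.
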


    Except for property \emph{(3)}, this is essentially \cite{GT}*{Theorem~4.1}.
    The following proof is an embellishment on the same idea.

    An application of \emph{(1)--(2)} to the inverse Galois problem was noted in \cite{GT}*{Corollary 4.2}: taking $p = 2$, if $U = \Gal(L/\Q)$ where $L/\Q$ is a Galois extension then $G = \Gal(L^K/\Q)$ where $L^K/\Q$ is a totally real Galois extension.

    \begin{proof}
        For each subgroup $H \le G$ of order $p$, let $T_H$ be the one-dimensional trivial $\F_p H$-module. Recall that $\dim H^2(H, T_H) = 1$. Let $W_H = \Ind_H^G T_H$. Then Shapiro's lemma gives $H^2(H, T_H) \cong H^2(G, W_H)$; more precisely, the natural restriction-evaluation map
        \begin{equation}
            \label{eq:shapiro-map}
            H^2(G, W_H) \longrightarrow H^2(H, T_H)
        \end{equation}
        is an isomorphism.
        Let $0 \ne \alpha_H \in H^2(H, T_H)$ and let $\beta_H \in H^2(G, W_H)$ be the inverse image of $\alpha_H$.
        Let $W_1$ be the direct sum of all these modules $W_H$ and let $\beta_1 \in H^2(G, W_1)$ be the corresponding class (exactly as in \cite{GT}*{Theorem~4.1}).

        Similarly, for each subgroup $H \le G$ isomorphic to $C_p \times C_p$, again let $T_H$ denote the one-dimensional trivial $\F_p H$-module and let $W_H = \Ind_H^G T_H$.
        Choose a cohomology class $\alpha_H \in H^2(H, T_H)$ corresponding to a nonabelian central extension of $C_p \times C_p$ by $C_p$, and let $\beta_H \in H^2(G, W_H)$ be its inverse image under the Shapiro map \eqref{eq:shapiro-map}.
        Let $W_2$ be the direct sum of all these modules $W_H$ and let $\beta_2 \in H^2(G, W_2)$ be the corresponding class.

        Now let $W = W_1 \oplus W_2$ and $\beta = \beta_1 \oplus \beta_2$. By the usual correspondence between $H^2(G, W)$ and extensions of $G$ by $W$, we thus have an extension
        \[
            1 \longrightarrow W \longrightarrow E \longrightarrow G \longrightarrow 1
        \]
        corresponding to $\beta$.
        We claim that $W$ contains all elements of $E$ of order $p$, and that $E$ contains no subgroup isomorphic to $C_{p^2} \times C_{p^2}$.
        Indeed, suppose a subgroup $F \le E$ has image $H \cong C_p$ in $G$.
        Then we have a restricted extension
        \[
            1 \longrightarrow F \cap W \longrightarrow F \longrightarrow H \cong C_p \longrightarrow 1.
        \]
        By choice of $(W_1, \beta_1)$, this extension has a quotient of the form
        \[
            1 \longrightarrow T_H \longrightarrow \overline F \longrightarrow H \cong C_p \longrightarrow 1,
        \]
        where the corresponding cohomology class is $\alpha_H \ne 0$, which implies that $\overline F \cong C_{p^2}$, and thus $|F| \ge p^2$.
        Similarly, if $F \le E$ is a subgroup with image $H \cong C_p \times C_p$ in $G$, then we have a restricted extension
        \[
            1 \longrightarrow F \cap W \longrightarrow F \longrightarrow H \cong C_p \times C_p \longrightarrow 1,
        \]
        and, by the choice of $(W_2, \beta_2)$, this extension has a quotient of the form
        \[
            1 \longrightarrow T_H \longrightarrow \overline F \longrightarrow H \cong C_p \times C_p \longrightarrow 1
        \]
        whose corresponding cohomology class is $\alpha_H$.
        Hence $\overline F$ is nonabelian, and therefore so is $F$.
        In particular $F \ncong C_{p^2} \times C_{p^2}$: since all elements of order $p$ are contained in $W$, such a subgroup would have image in $G$ isomorphic to $C_p \times C_p$.

        Finally, let $U$ be a minimal preimage of $G$ in $E$. Then $U$ is a Frattini cover of $G$, $K = U \cap W$ is elementary abelian and contains all the elements of order $p$ in $U$, and $U$ does not contain a copy of $C_{p^2} \times C_{p^2}$.
    \end{proof}

    We remark that the direct sum operation for group cohomology used in the proof of \Cref{thm:op1} has a simple translation to the language of group extensions.
    Given extensions
    \[1\to W_i\to E_i\xrightarrow{\phi_i} G\to 1\]
    corresponding to $(W_i,\beta_i)$ for $i=1,2$, the extension corresponding to $(W_1\oplus W_2,\beta_1\oplus\beta_2)$ arises from the subdirect product
    \[E=\{(g_1,g_2)\in E_1\times E_2 : \phi_1(g_1)=\phi_2(g_2)\}.\]

    \begin{theorem}
        \label[theorem]{thm:op2}
        Let $G$ be a finite group, let $p$ be a prime, and let $q = p^f$ be a power of $p$, where $f \ge 1$. There is a Frattini $p$-cover $\phi : U \to G$ such that
        \begin{enumerate}
            \item $K = \ker(\phi) \cong C_q^r$ for some $r \ge 0$, and
            \item every element of $U$ of order dividing $q$ is contained in $K$.
        \end{enumerate}
    \end{theorem}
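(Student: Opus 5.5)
The plan is to avoid controlling the kernel of a minimal preimage directly, which need not be homocyclic. Instead I take a universal object whose kernel is homocyclic by construction, and transfer the order condition to it from a cohomologically built extension, as in \Cref{thm:op1}.

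\emph{Step 1: an auxiliary extension $E$.} For each cyclic subgroup $H \le G$ of order $p^j$ with $1 \le j \le f$, let $T_H = \Z/q\Z$ with trivial $H$-action. Since $H$ is cyclic, periodicity of cyclic cohomology gives
\[
H^2(H, T_H) \cong T_H/p^jT_H \cong \Z/p^j\Z ,
\]
using $j \le f$. Choose $\alpha_H$ to be a generator. The corresponding central extension of $H$ by $C_q$ is cyclic of order $p^j q$, since the class of a generator has order $p^j$.

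Put $W_H = \Ind_H^G T_H$, which is a free $\Z/q\Z$-module. Let $\beta_H \in H^2(G,W_H)$ be the preimage of $\alpha_H$ under the Shapiro isomorphism \eqref{eq:shapiro-map}, which holds verbatim with $\Z/q\Z$-coefficients. Set $W = \bigoplus_H W_H$ and $\beta = \bigoplus_H \beta_H$, and let $1 \to W \to E \to G \to 1$ be the corresponding extension. Its kernel $W$ is abelian of exponent $q$ and is a $p$-group.

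\emph{Step 2: the order condition in $E$.} I claim every $x \in E$ of order dividing $q$ lies in $W$. Suppose not, so the image $h$ of $x$ is nontrivial. Then $h$ has order $p^j$ with $1 \le j \le f$, since its order divides $q$. Let $H = \langle h\rangle$ and let $F$ be the preimage of $H$ in $E$.

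Exactly as in the proof of \Cref{thm:op1}, $F$ has a quotient $\overline F$ which is the extension of $H$ by $T_H$ with class $\alpha_H$, so $\overline F \cong C_{p^jq}$. The image of $x$ generates $\overline F$ modulo $T_H$. In a cyclic $p$-group, an element that generates modulo a proper subgroup generates the whole group, so the image of $x$ has order $p^jq > q$. This is a contradiction.

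The same property passes to any subgroup of $E$. In particular it holds for a minimal preimage $U_E \le E$ of $G$, which is a Frattini $p$-cover of $G$ with abelian kernel of exponent dividing $q$.

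\emph{Step 3: a universal homocyclic cover.} Let $\tilde\phi : \tilde G \to G$ be the universal $p$-Frattini cover, with kernel $N$. By Gaschütz's theorem (see \cite{FJ}), $N$ is a free pro-$p$ group of finite rank $r$.

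Let $M = N^q[N,N]$. This is characteristic in $N$, hence normal in $\tilde G$, and $N/M \cong C_q^r$. Set $U = \tilde G/M$ with the induced map $\phi : U \to G$. Its kernel $K = N/M$ lies in $\Phi(U)$, since $N \le \Phi(\tilde G)$. So $\phi$ is a Frattini $p$-cover satisfying (1).

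By the universal property, the Frattini $p$-cover $U_E \to G$ is dominated by $\tilde G$: there is an epimorphism $\tilde G \to U_E$ over $G$. It carries $N$ into $\ker(U_E \to G)$, which is abelian of exponent dividing $q$, so it kills $M$. Hence it factors through an epimorphism $\psi : U \to U_E$ over $G$.

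Now let $u \in U$ have order dividing $q$. Then $\psi(u)$ has order dividing $q$, so by Step 2 it lies in $\ker(U_E \to G)$. Therefore $\phi(u) = 1$, i.e.\ $u \in K$, which gives (2).

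\emph{Main obstacle.} The main obstacle I expect is Step 3's input: that the kernel of the universal $p$-Frattini cover is free pro-$p$, and that the universal property yields the required factorization over $G$. The cohomological part in Steps 1 and 2 should be routine given \Cref{thm:op1}.

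If one prefers to avoid profinite machinery, one could try to show directly that a suitable Frattini quotient of $E$ has homocyclic kernel. I expect that route to be harder.
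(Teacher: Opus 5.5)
Your proof is correct. Step 3 is exactly the paper's construction: the paper also takes the universal Frattini $p$-cover $\U \to G$ with free pro-$p$ kernel $\K$, passes to $\U/[\K,\K]\K^q$, and dominates an auxiliary cover via the universal property. (The freeness is \cite{FJ}*{25.12.8}; Gasch\"utz is not quite the right attribution.)

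The real difference is where property (2) comes from.

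\emph{The paper's route.} It dominates the cover $U_1$ of \Cref{thm:op1}. That cover has elementary abelian kernel and only guarantees that lifts of elements of order $p$ have order $p^2$. The paper then uses the homocyclic structure of $K_2\cong C_q^r$ to amplify this. If $\phi_2(g)$ has order $p$, then $g^p\in K_2$ has nontrivial image in $K_1$, so $g^p\notin K_2^p$. Every element of $C_q^r$ outside $K_2^p$ has order exactly $q$, so $g$ has order $pq$.

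\emph{Your route.} You redo the cohomological construction with $\Z/q\Z$ coefficients, so that $U_E$ already has property (2). Property (2) then pulls back trivially along any homomorphism over $G$. In your argument the homocyclicity of $K$ is needed only for (1).

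\emph{Trade-off.} The paper's version is more economical: it reuses \Cref{thm:op1} verbatim and needs no new cohomology. Yours is more modular, since any cover of $G$ dominating $U_E$ inherits (2). The cost is the extra computation $H^2(C_{p^j},\Z/q\Z)\cong\Z/p^j\Z$.

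Incidentally, you only need the subgroups with $j=1$. If $x$ has order dividing $q$ and its image has order $p^j$, then $x^{p^{j-1}}$ has order dividing $q$ and image of order $p$. The $j=1$ quotient $\overline F\cong C_{pq}$ already gives a contradiction.
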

    \begin{proof}
        Let $\phi : \U\to G$ be the universal Frattini $p$-cover of $G$ and let $\K= \ker(\phi)$ (see \cite{FJ}*{25.12}).
        Then $\K$ and $\U$ are profinite groups, and $\K$ is free pro-$p$ on some $r\ge 0$ generators by \cite{FJ}*{25.12.8}.
        Now compare the cover $\phi_1 : U_1 \to G$ constructed in \Cref{thm:op1}, with $K_1 = \ker(\phi_1)$ elementary abelian.
        By the universal property of $\phi$, there is a cover $\theta : \U \to U_1$ making the following diagram commute:
        \[
        \begin{tikzcd}
            & \K \arrow[r, hook] \arrow[d, two heads]
            & \U \arrow[dr, two heads, "\phi"] \arrow[d, two heads, "\theta"] \\
            & K_1 \cong C_p^d \arrow[r, hook]
            & U_1 \arrow[r, two heads, "\phi_1"]
            & G
        \end{tikzcd}
        \]
        Since $K_1$ is elementary abelian, the kernel of $\theta|_\K$ contains $\Phi(\K) = [\K,\K] \K^p$. Now define
        \[
            K_2 = \frac{\K}{[\K,\K] \K^q}, \qquad U_2 = \frac{\U}{[\K, \K] \K^q}.
        \]
        Since $\K$ is free pro-$p$, we have $\K / [\K, \K] \cong \Z_p^r$ and $K_2 \cong C_q^r$.
        Thus we obtain the following commutative diagram of finite groups:
        \[
        \begin{tikzcd}
            & K_2 \cong C_q^r \arrow[r, hook] \arrow[d, two heads]
            & U_2 \arrow[dr, two heads, "\phi_2"] \arrow[d, two heads, "\eta"] \\
            & K_1 \cong C_p^d \arrow[r, hook]
            & U_1 \arrow[r, two heads, "\phi_1"]
            & G
        \end{tikzcd}
        \]
        The cover $\phi_2 : U_2 \to G$ is again Frattini since $[\K, \K]\K^q \le \ker \phi$.
        Now $\phi_1 : U_1 \to G$ has the property that if $g \in U_1$ and $\phi_1(g)$ has order $p$ then $g$ has order $p^2$.
        Since the kernel of $\eta|_{K_2}$ contains $K_2^p$, it follows that $\phi_2:U_2 \to G$ has the property that if $g \in U_2$ and $\phi_2(g)$ has order $p$ then $g^p \in K_2 \setminus K_2^p$, so $g^p$ has order $q$ and $g$ has order $pq$.
        Consequently every element of $U_2$ of order dividing $q$ is contained in $K_2$, and $\phi_2$ has the required properties.
    \end{proof}

    \begin{corollary}
        \label[corollary]{cor:op2}
        Let $G$ be a finite group and let $n \ge 1$ be an integer. There is a finite Frattini cover $\phi: U \to G$ such that
        \begin{enumerate}
            \item $K = \ker(\phi)$ is abelian, and
            \item every element of $U$ of order dividing $n$ is contained in $K$.
        \end{enumerate}
    \end{corollary}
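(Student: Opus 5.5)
Write $n = \prod_{i=1}^k p_i^{f_i}$ with distinct primes $p_i$ and $f_i \ge 1$. For each $i$, \Cref{thm:op2} (with $p = p_i$, $q = p_i^{f_i}$) gives a Frattini $p_i$-cover $\phi_i : U_i \to G$ with $K_i = \ker(\phi_i) \cong C_{p_i^{f_i}}^{r_i}$, and with every element of $U_i$ of order dividing $p_i^{f_i}$ lying in $K_i$. I will combine these covers by the fibre product construction described after \Cref{thm:op1}:
\[
    E = \{(u_1,\dots,u_k) \in U_1 \times \cdots \times U_k : \phi_1(u_1) = \cdots = \phi_k(u_k)\},
\]
with $\psi : E \to G$ the common value. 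Then $\ker(\psi) = K_1 \times \cdots \times K_k$, which is abelian. If $n = 1$ the identity cover works, so assume $k \ge 1$.

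First I check that every element of $E$ of order dividing $n$ lies in $\ker\psi$. Let $x = (u_1,\dots,u_k)$ with $x^n = 1$, and let $g = \psi(x)$. Fix $i$ and write $m_i = n/p_i^{f_i}$, which is coprime to $p_i$. Then $u_i^{m_i}$ has order dividing $p_i^{f_i}$, so $u_i^{m_i} \in K_i$, hence $g^{m_i} = 1$. The numbers $m_1,\dots,m_k$ have gcd $1$, so $g = 1$ and $x \in \ker\psi$.

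Next I need a Frattini cover, and $E \to G$ need not be one. As in the proof of \Cref{thm:op1}, let $U \le E$ be a minimal subgroup with $\psi(U) = G$, and set $\phi = \psi|_U$. Then $\phi$ is a Frattini cover by criterion (2). Its kernel $U \cap \ker\psi$ is a subgroup of an abelian group, hence abelian. Any element of $U$ of order dividing $n$ lies in $\ker\psi \cap U = \ker\phi$. So $\phi$ has both required properties.

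Alternatively, one can show directly that $E \to G$ is Frattini, since its kernel is nilpotent with Sylow $p_i$-subgroup $K_i$. Passing to a minimal preimage avoids having to check this, so I regard that as the cleaner route. The only real content is the gcd argument above, which is routine.
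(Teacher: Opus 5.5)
Your proof is correct, but it assembles the prime-power pieces differently from the paper.

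\textbf{The paper's route.} The paper applies \Cref{thm:op2} iteratively, building a tower $U_m \to \cdots \to U_1 \to G$ in which each $\phi_i$ is a cover of the previous group $U_{i-1}$, not of $G$. This gives three things:
\begin{enumerate}
\item the composite is automatically Frattini;
\item its kernel is abelian because it is nilpotent, with Sylow subgroups isomorphic to the $K_i$;
\item the element-order property follows by induction on $i$, using $g^{n_{i-1}}$.
\end{enumerate}

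\textbf{Your route.} You apply \Cref{thm:op2} to $G$ once for each prime power and take the fibre product over $G$. The kernel is then visibly $K_1 \times \cdots \times K_k$, and the element-order property follows from a one-line B\'ezout argument with the cofactors $m_i = n/p_i^{f_i}$. The cost is that $E \to G$ is not obviously Frattini, which you handle by passing to a minimal preimage.

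\textbf{Your side remark.} It is also correct, and it shows the minimal-preimage step is actually vacuous. Let $\pi_i : E \to U_i$ be the coordinate projection, and suppose $H \le E$ with $\psi(H) = G$.
\begin{enumerate}
\item Since $\phi_i$ is Frattini and $\phi_i(\pi_i(H)) = G$, we get $\pi_i(H) = U_i$.
\item Hence $H \cap \ker\psi$ projects onto each $K_i$: any $h \in H$ with $\pi_i(h) \in K_i$ lies in $\ker\psi$.
\item The $K_i$ have pairwise coprime orders, so $H \cap \ker\psi$ is the product of its intersections with the factors $K_i$. Step (2) then forces $\ker\psi \le H$, whence $H = H\ker\psi = E$.
\end{enumerate}

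So your construction directly yields a Frattini cover whose kernel is exactly $K_1 \times \cdots \times K_k$, a product of homocyclic groups, just as in the paper.
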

    \begin{proof}
        Factorize $n = \prod_{i=1}^m q_i$, where $q_1, \dots, q_m$ are prime powers with distinct bases, and apply \Cref{thm:op2} to $q_1, \dots, q_m$ in sequence. We obtain a sequence of finite Frattini covers
        \[
        \begin{tikzcd}
            & U_m \arrow[r, two heads, "\phi_m"]
            & \cdots\arrow[r, two heads, "\phi_3"]
            & U_2 \arrow[r, two heads, "\phi_2"]
            & U_1 \arrow[r, two heads, "\phi_1"]
            & G,
        \end{tikzcd}
        \]
        with the property that every element of $U_i$ of order dividing $q_i$ is contained in the abelian group $K_i = \ker \phi_i$.
        The composition $\psi = \phi_1 \circ \cdots \circ \phi_m$ is again a Frattini cover.
        Being nilpotent, $K = \ker \psi$ is isomorphic to the direct product of its Sylow subgroups, so $K \cong \prod_{i=1}^m K_i$ is again abelian.

        Now we claim by induction that if $g \in U_i$ is an element of order dividing $n_i = q_1 \cdots q_i$ then $g \in \ker \psi_i$, where $\psi_i = \phi_1 \circ \cdots \circ \phi_i$.
        This is true for $i = 0$ with the natural conventions $n_0 = 1$, $U_0 = G$, and $\psi_0 = \id_G$.
        Now suppose $g \in U_i$ has order dividing $n_i$. Then $h = g^{n_{i-1}}$ has order dividing $q_i$, so $h \in K_i$ and $\phi_i(h) = 1$. Thus $\phi_i(g)$ has order dividing $n_{i-1}$, so $\phi_i(g) \in \ker \psi_{i-1}$ by induction, so $g \in \ker \psi_i$.
        This gives the result.
    \end{proof}

    The last ingredient we need to prove \Cref{thm:main} is the following variant of Thompson--Mann (\Cref{thm:thompson--mann}).

    \begin{theorem}
        \label[theorem]{thm:thompson--mann-variant}
        Let $G$ be a finite group with exponent divisible by $p^{m+e}$ and a quotient isomorphic to $C_p^{2n^2}$, where $p$ is a prime, $m, n \ge 1$, and $e = \ceil{\log_p(n)}$.
        Then $G$ contains a subgroup isomorphic to $C_{p^m} \times C_p^{n-1}$.
    \end{theorem}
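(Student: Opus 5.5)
The plan is to pass to a Sylow $p$-subgroup, use \Cref{thm:thompson--mann} to find a normal elementary abelian subgroup of rank $n$, and then use an element of large order, suitably powered, to centralize it.

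Since $G$ has a quotient $C_p^{2n^2}$, which is a $p$-group, a Sylow $p$-subgroup $P$ of $G$ maps onto it. Hence $d(P) \ge 2n^2$. The exponent of $G$ is divisible by $p^{m+e}$, so $P$ contains an element $x$ of order $p^{m+e}$ (conjugate a suitable $p$-element into $P$).

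First I find a large normal elementary abelian subgroup of $P$. Let $A$ be a maximal elementary abelian normal subgroup of $P$, of rank $k$. If $k \le n-1$, then \Cref{thm:thompson--mann} gives $d(P) \le 2k^2 \le 2(n-1)^2 < 2n^2$, a contradiction. So $k \ge n$. Since $P$ is a $p$-group acting on $A$, there is a chain of $P$-invariant subgroups of $A$ with successive indices $p$. In particular there is a subgroup $B \le A$ with $B \cong C_p^n$ and $B \trianglelefteq P$.

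Next I centralize $B$. Conjugation by $x$ induces an element of $\GL_n(\F_p) \cong \Aut(B)$ of $p$-power order, hence a unipotent element. A unipotent $n\times n$ matrix $u$ satisfies $(u-1)^n = 0$. Therefore $(u-1)^{p^e} = u^{p^e}-1 = 0$ as soon as $p^e \ge n$, which holds for $e = \ceil{\log_p(n)}$. So $y = x^{p^e}$ centralizes $B$, and $y$ has order $p^m$.

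Finally I build the subgroup. The group $M = \gen{y}B$ is abelian of exponent $p^m$. Since $\gen{y}$ is cyclic, $\gen{y}\cap B$ has order at most $p$, so $|M| \ge p^{m+n-1}$. In a finite abelian $p$-group, a cyclic subgroup of maximal order is a direct factor. Hence $M = \gen{y} \times C$ for some $C$ with $|C| \ge p^{n-1}$. Moreover $C \cong M/\gen{y}$ is generated by the image of $B$, so $C$ is elementary abelian of rank at least $n-1$. Choosing $C' \le C$ of rank exactly $n-1$ gives the required $\gen{y}\times C' \cong C_{p^m}\times C_p^{n-1}$.

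The main point requiring care is the second step: one needs a $P$-normal subgroup of rank exactly $n$, rather than the possibly larger $A$, so that the bound on the order of a unipotent element depends only on $n$. Passing to a $P$-invariant subgroup $B$ of $A$ handles this, and the unipotent order bound then gives exactly the loss of $e$ in the exponent.
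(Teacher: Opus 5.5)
Your proof is correct and follows essentially the same route as the paper: reduce to a Sylow $p$-subgroup, apply \Cref{thm:thompson--mann} to get a normal elementary abelian subgroup of rank at least $n$, pass to a normal subgroup of rank exactly $n$, and use that $p$-elements of $\GL_n(p)$ have order at most $p^e$, so $x^{p^e}$ centralizes it. Your explicit justifications (the identity $(u-1)^{p^e}=u^{p^e}-1$ with $p^e\ge n$, and splitting off $\gen{y}$ as a direct factor of $\gen{y}B$) just fill in details that the paper states without proof, namely the order bound in $\GL_n(p)$ and the final ``$C_{p^m}\times C_p^{n-1}$ or $C_{p^m}\times C_p^{n}$'' dichotomy.
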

    \begin{proof}
        By restricting to a Sylow subgroup we may assume $G$ is a $p$-group.
        Then, since $G$ covers $C_p^{2n^2}$, $d(G) \ge 2n^2$, so \Cref{thm:thompson--mann} implies that $G$ contains an elementary abelian normal subgroup $V$ of rank $r \ge n$,
        as well as an element $g$ of order $p^{m+e}$.
        Considering a chief series of $G$ containing $V$, it follows that $G$ has an elementary abelian normal subgroup $U \le V$ of rank $n$.
        Now $g$ induces an automorphism of $U$, but the maximal order of a $p$-element in $\Aut(U) \cong \GL_n(p)$ is $p^e$, so the element $h = g^{p^e}$ of order $p^m$ centralizes $U$.
        It follows that $\gen{U, h}$ is isomorphic to either $C_{p^m} \times C_p^{n-1}$ or $C_{p^m} \times C_p^{n}$ (depending on whether $\gen h \cap U = 1$).
    \end{proof}

    We are now ready to prove \Cref{thm:main}.

    \begin{proof}[Proof of \Cref{thm:main}]
        \emph{(only if)}
        Let $H$ be a forcible finite group. Let $G$ be a finite group that forces $H$.
        Applying \Cref{cor:op2} with $n = |H|$, there is a finite Frattini cover $\phi: U \to G$ with $K = \ker(\phi)$ abelian such that every element of $U$ of order dividing $n$ is contained in $K$. Since $G$ forces $H$, $H$ embeds in $U$ and the image must be contained in $K$, so $H$ is abelian.
        Similarly, \Cref{thm:op1} implies that $H$ does not contain a copy of $C_{p^2} \times C_{p^2}$ for any prime $p$.
        Thus each Sylow subgroup of $H$ must be elementary-by-cyclic.

        \emph{(if)}
        First, observe that, for any prime $p$, \Cref{thm:thompson--mann-variant} implies that an elementary-by-cyclic abelian $p$-group
        \[
        H = C_{p^m} \times C_p^{n-1} \qquad (m, n \ge 1)
        \]
        is forced by the abelian group
        \[
        G = C_{p^{m+e}} \times C_p^{2n^2}, \qquad e = \ceil{\log_p(n)}.
        \]
        Now suppose $H$ is an abelian group whose Sylow subgroups $H_p$ are elementary-by-cyclic. Let $P$ be the set of prime factors of $|H|$. For each $p \in P$ let $G_p$ be an abelian finite group that forces $H_p$, and let $G = \prod_{p \in P} G_p$.
        We claim that $G$ forces $H$.
        Indeed, let $\phi: U \to G$ be a cover of $G$.
        Without loss of generality $\phi$ is a minimal cover, hence Frattini, so $U / \Phi(U)$ is abelian.
        This implies that $U$ is nilpotent.
        Since $G_p$ forces $H_p$, it follows that $H_p$ embeds in $U$.
        Since $U$ is nilpotent, it is the direct product of its Sylow subgroups, so $H = \prod_{p \in P} H_p$ also embeds in $U$.
    \end{proof}

    \section{Relative forcibility results}
    \label{sec:relative}

    We complement our main result \Cref{thm:main} by proving forcibility of abelian groups within special classes of $p$-groups.
    For a $p$-group $G$, we denote as usual
    \[
        \Omega_e(G) = \gen{x\in G:x^{p^e}=1},
        \qquad
        \mho_e(G) =
        \gen{x^{p^e}:x\in G}.
    \]
    Recall that a finite $p$-group $G$ is said to be \emph{powerful} if
    $G/\mho_1(G)$ is abelian when $p$ is odd, and if $G/\mho_2(G)$ is abelian when
    $p=2$.
    \begin{theorem}
        \label[theorem]{thm:powerful}
        Let $U$ be a powerful finite $p$-group, and let $d, f \ge 1$. If $U \twoheadrightarrow C_{p^{2f}}^d$ then $U$ has a subgroup isomorphic to $C_{p^f}^d$.
    \end{theorem}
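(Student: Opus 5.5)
The plan is to use the standard structure theory of powerful $p$-groups. I treat $p$ odd in detail and indicate the shift of indices needed for $p=2$. The facts I will use are these. If $U$ is powerful then each $\mho_i(U)$ is powerful and equals $\{x^{p^i}:x\in U\}$. The commutators satisfy $[\mho_i(U),\mho_j(U)]\le \mho_{i+j+1}(U)$; for $p=2$ this becomes $[\mho_i(U),\mho_j(U)]\le\mho_{i+j+2}(U)$. Finally, by Héthelyi--Lévai, $\Omega_i(U)=\{x: x^{p^i}=1\}$, this set has exponent at most $p^i$, and $|\Omega_i(U)|=|U:\mho_i(U)|$.

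First I reduce to a convenient subgroup. Let $\pi: U\twoheadrightarrow A=C_{p^{2f}}^d$. Then $\pi(\mho_f(U))=\mho_f(A)\cong C_{p^f}^d$ and $\pi(\mho_{2f}(U))=1$. So $W=\mho_f(U)$ is powerful and covers $C_{p^f}^d$. Moreover, by the commutator estimate $[W,W]\le\mho_{2f+1}(U)$, which lies deep inside $\ker\pi$. The candidate subgroup is then obtained by taking $p^f$-th roots in the opposite direction. I will look for an abelian subgroup of exponent $p^f$ and rank $d$ inside $\Omega_f(W)$, or inside $\Omega_f(\mho_f(U))\cdot(\text{something central})$, using the counting identity
\[
|\Omega_f(W)|=|W:\mho_f(W)|=|\mho_f(U):\mho_{2f}(U)|\ge p^{fd}.
\]

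Concretely, I would choose $x_1,\dots,x_d\in U$ mapping to a basis of $A$ and set $y_i=x_i^{p^f}\in W$. The images $\pi(y_i)$ form a basis of $\mho_f(A)\cong C_{p^f}^d$, so $Y=\gen{y_1,\dots,y_d}$ surjects onto $C_{p^f}^d$ and has $d(Y)\ge d$. Next I show that $Y$, or a suitable modification of it, is abelian of the right shape. For this I would iterate the commutator estimate: in a powerful group, $[x^{p^f},y^{p^f}]$ can be written as $[x,y]^{p^{2f}}$ up to higher-order corrections, and $[x,y]\in\mho_1(U)$. So $[Y,Y]\le\mho_{2f+1}(U)$, and the obstruction to commutativity sits in a subgroup killed by all the relevant powers. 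I would then correct the $y_i$ by elements of $\mho_{2f}(U)\cap\Omega$-type subgroups, or pass to the elements $y_i^{p^{k}}$ inside $\Omega_f$, so that the corrected elements commute and have order exactly $p^f$. The independence needed for $C_{p^f}^d$, rather than something smaller, would come from the surjection onto $\mho_f(A)$ together with $|\Omega_f(W)|\ge p^{fd}$.

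The main obstacle is making the commutation exact rather than only modulo $\mho_{2f+1}(U)$, while keeping the rank equal to $d$ and the exponent equal to $p^f$.

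My first attempt would be induction on $|U|$. Choose a central subgroup $M$ of order $p$ inside $\mho_{2f}(U)\le\ker\pi$, chosen as deep as possible in the $\mho$-series. The quotient $U/M$ is still powerful and still covers $A$, so by induction it contains $H/M\cong C_{p^f}^d$. It then remains to show that the central extension $H$ of $C_{p^f}^d$ by $C_p$ splits off a copy of $C_{p^f}^d$. Here the powerful-group estimates should be used: they give $\mho_f(H)\cap M$ and $[H,H]\cap M$ control strong enough to force $H$ abelian with $M$ a direct factor. The reason is that $M$ was chosen inside $\mho_{2f}(U)$, so elements of $H$ can only commute up to, or power into, $M$ in a way that is compatible with a splitting. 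If this splitting argument fails, the fallback is to work directly in $\Omega_f(\mho_f(U))$ and use the Héthelyi--Lévai structure to find $d$ commuting elements of order $p^f$.
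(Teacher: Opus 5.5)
Your proposal has a genuine gap. You pick the right subgroup, $W=\mho_f(U)$ and its $\Omega_f$, but you never prove the fact that makes the argument work. You say so yourself: you only get commutation modulo $\mho_{2f+1}(U)$, and the proposed repairs are not carried out. Those repairs were correcting the lifts $y_i$, passing to powers $y_i^{p^k}$, and induction. Note that the $y_i$ need not have order $p^f$, and replacing them by powers destroys the surjection onto $C_{p^f}^d$.

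\textbf{The missing lemma.} What is needed is an \emph{exact} commutation theorem for powerful groups, due to Fern\'andez-Alcober: if $a,b$ in a powerful $p$-group have order at most $p^i$, then $[a^{p^j},b^{p^k}]=1$ whenever $j+k\ge i$. Combine this with the fact that every element of $X=\mho_f(U)$ is a $p^f$-th power. If $x,y\in X$ have order at most $p^f$, write $x=a^{p^f}$ and $y=b^{p^f}$ with $a,b\in U$ of order at most $p^{2f}$. Taking $i=2f$ and $j=k=f$ gives $[x,y]=1$. Hence $A=\Omega_f(X)$ is abelian of exponent at most $p^f$, and $\Omega_i(A)=\Omega_i(X)$ for $i\le f$. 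The order formula $|\Omega_i(X)|=|X:\mho_i(X)|$ is then available for all $p$. Your listed facts are false for $U$ itself when $p=2$: in the powerful group $D_8\circ C_8=\gen{r,s}\circ\gen{u}$ with $r^2=u^4$, the involutions $s$ and $rs$ have product of order $4$.

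\textbf{The rank count.} Your count $|\Omega_f(W)|\ge p^{fd}$ cannot give independence, since $C_p^{fd}$ has the same order. What is needed is the layer count $|\Omega_f(A):\Omega_{f-1}(A)|=|\mho_{f-1}(X):\mho_f(X)|\ge p^d$. The inequality holds because $X$ maps onto $C_{p^f}^d$. It shows that $A$ has at least $d$ cyclic factors of order $p^f$.

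\textbf{The induction fallback is false as formulated.} Take $p$ odd, $f=1$, $d=4$, and let $E$ be extraspecial of order $p^5$ and exponent $p$. Let $Q=E\circ\gen{w}$ with $w$ of order $p^3$ and $Z(E)$ identified with $\gen{w^{p^2}}$, and put $U=C_{p^2}^4\times Q$.
\begin{itemize}
\item Since $[U,U]=Z(E)\le\mho_1(U)$, the group $U$ is powerful, and it maps onto $C_{p^2}^4$.
\item $\mho_2(U)=Z(E)$ has order $p$ and $\mho_3(U)=1$, so your prescription forces $M=Z(E)$.
\item In $U/M$ the subgroup $E/M\cong C_p^4$ is a legitimate output of the inductive hypothesis.
\item Its preimage $E$ has abelian subgroups of order at most $p^3$, so it contains no $C_p^4$, and the claimed splitting fails.
\end{itemize}
The fact that $M\le\mho_{2f}(U)$ gives no control over $[H,H]$, because $H$ is an arbitrary subgroup: it is neither powerful nor contained in $\mho_f(U)$. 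To rescue the induction you would have to place $H$ inside $\Omega_f(\mho_f(U))$. That brings you back to needing the commutation theorem above, which is the route the paper takes directly.
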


    \begin{proof}
        Let $X = \mho_f(U)$.
        Then $X$ is powerful~\cite{LM}*{1.2 \& 4.1.2} and $X \twoheadrightarrow C_{p^f}^d$.

        Suppose $x, y \in X$ have order at most $p^f$.
        By \cite{LM}*{1.7 \& 4.1.7}, there exist $a, b \in U$ such that \(x=a^{p^f}\), \(y=b^{p^f}\).
        Since $a$ and $b$ have order at most $p^{2f}$, \cite{FA}*{Theorem~1(ii)}, applied with $i=2f$ and $j=k=f$, gives
        \[
            [x, y] = [a^{p^f}, b^{p^f}] = 1.
        \]
        Thus $A = \Omega_f(X)$ is an abelian subgroup of $X$ of exponent at most $p^f$.
        Moreover $\Omega_i(A) = \Omega_i(X)$ for $0 \le i \le f$.
        Consequently $\exp(\Omega_i(X)) \le p^i$, so $\Omega_i(X)$ is precisely the set of elements of $X$ of order at most $p^i$.

        Write
        \[
            A\cong C_{p^f}^r\times B,
            \qquad \exp(B)\le p^{f-1}.
        \]
        Since $X \twoheadrightarrow C_{p^f}^d$, it follows that $\mho_{f-1}(X) / \mho_f(X) \twoheadrightarrow C_p^d$.
        Applying \cite{FA}*{Theorem~4}, $|\Omega_i(X)| = |X : \mho_i(X)|$ for $0 \le i \le f$, so
        \[
            p^r = |\Omega_f(A) : \Omega_{f-1}(A)|
            =
            |\mho_{f-1}(X) : \mho_f(X)| \ge p^d.
        \]
        Thus $r \ge d$. This implies that $A$, and hence $U$, contains a subgroup isomorphic to $C_{p^f}^d$.
    \end{proof}

    We now apply this to groups of bounded nilpotency class.

    \begin{theorem}
        \label[theorem]{thm:class}
        Let $c, d, f \ge 1$, let $U$ be a finite $p$-group of nilpotency class at most $c$, and
        let
        \(
            e=\ceil{c\log_p d}.
        \)
        If $U \twoheadrightarrow C_{p^{2f+e+1}}^d$
        then $U$ has a subgroup isomorphic to $C_{p^f}^d$.
    \end{theorem}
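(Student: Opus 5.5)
Reduce to \Cref{thm:powerful} by finding a powerful subgroup (or section) that still covers a large homocyclic group. The natural candidate is $V = \mho_{e+1}(U)$, or more precisely the subgroup generated by $(e+1)$-th powers of lifts of generators. The plan is to show it is powerful and still maps onto $C_{p^{2f}}^d$.

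Fix the epimorphism $\pi: U \twoheadrightarrow C_{p^{2f+e+1}}^d$ and choose $u_1,\dots,u_d \in U$ with $\pi(u_i)$ the standard generators. Since $\pi$ need not be Frattini, first pass to a minimal preimage $U_0=\gen{u_1,\dots,u_d}$. This is still a $p$-group of class at most $c$, now $d$-generated, and it still maps onto $C_{p^{2f+e+1}}^d$. Let $V = \gen{u_1^{p^{e+1}},\dots,u_d^{p^{e+1}}}\le U_0$. Then $\pi(V) = \mho_{e+1}(C_{p^{2f+e+1}}^d)\cong C_{p^{2f}}^d$, so $V\twoheadrightarrow C_{p^{2f}}^d$. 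If $V$ is powerful, \Cref{thm:powerful} gives $C_{p^f}^d\le V\le U$, which proves the statement.

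The key step, and the main obstacle, is showing that $V$ (or a suitable variant) is powerful. The relevant fact is that in a $d$-generated $p$-group $U_0$ of class $c$, the subgroup $\mho_k(U_0)$ is powerful once $p^k$ is large compared with $d^c$. Equivalently, $U_0$ has a powerful subgroup generated by $p^{e}$-th powers with $e=\ceil{c\log_p d}$, plus one extra power to handle $p=2$ or the odd-$p$ requirement $[V,V]\le V^p$.

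To prove this I would try the following route. The group $U_0/\mho_j(U_0)$-type quotients are controlled by the order of the relatively free group of class $c$ and exponent $p^j$. Alternatively, pass to the abelianization and consider the index $|U_0:Z|$ for a suitable abelian normal subgroup. The concrete approach I would try first uses the lower central series: $\gamma_i(U_0)/\gamma_{i+1}(U_0)$ is generated by at most $d^i$ commutators. Commutator calculus then shows that $[x^{p^k},y^{p^k}]$ lies in $\mho_{k+1}$-type subgroups once $p^{k}$ exceeds the exponents appearing in the Hall--Petrescu formula. These exponents have size about $p^{c\log_p d}$, which is exactly where $e$ comes from.

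If this direct route fails, the fallback is to use the Lubotzky--Mann result that a $p$-group of bounded rank has a powerful characteristic subgroup of bounded index, applied after bounding the rank of $V$ by roughly $d^c$. That bound is crude, though, and probably would not produce the sharp exponent $e$. So I expect the real work to be establishing $[V,V]\le V^{p}$ (respectively $V^4$ when $p=2$), using the nilpotency class $c$ and $d$ generators.
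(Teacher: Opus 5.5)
Your reduction has the same shape as the paper's: pass to a $d$-generated minimal preimage $U_0$, find a powerful subgroup mapping onto $C_{p^{2f}}^d$, and apply \Cref{thm:powerful}. There is, however, a genuine gap exactly at the step you call the main obstacle, and the subgroup you commit to is the wrong one.

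The subgroup $V=\gen{u_1^{p^{e+1}},\dots,u_d^{p^{e+1}}}$ is in general not powerful. Take $p$ odd and let $U_0$ be the group of upper unitriangular $3\times 3$ matrices over $\Z/p^N\Z$, with standard generators $x,y$ and central $z=[x,y]$. This group has class $2$ and maps onto $C_{p^N}^2$. For $V=\gen{x^{p^k},y^{p^k}}$ we have $z^{p^{2k}}=[x^{p^k},y^{p^k}]\in[V,V]$. On the other hand, every $p$-th power of an element of $V$ lies in the subgroup $\gen{x^{p^{k+1}},y^{p^{k+1}},z^{p^{2k+1}}}$, which does not contain $z^{p^{2k}}$ when $N>2k$. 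Taking $k=e+1$ and $N=2f+e+1$ with $2f>e+1$ shows your $V$ need not be powerful. The right subgroup is all of $\mho_{e+1}(U_0)$. It also maps onto $\mho_{e+1}(C_{p^{2f+e+1}}^d)\cong C_{p^{2f}}^d$, and it is the paper's $X$.

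For the powerfulness of $\mho_{e+1}(U_0)$ you give no actual argument. Your Hall--Petrescu sketch does not address the real difficulty, which is controlling powers of products, i.e.\ $\mho_1(\mho_k(U_0))$ versus $\mho_{k+1}(U_0)$. Its explanation of $e$ is also wrong: the exponents $\binom{p^k}{i}$ with $i\le c$ that arise there do not involve $d$ at all.

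The missing idea is the route you dismiss as crude.
\begin{enumerate}
\item Since $U_0$ is $d$-generated of class at most $c$, it has subgroup rank at most $d^c$ (count basic commutators along the lower central series).
\item Lubotzky--Mann's result in its exponent form, not the bounded-index form, says the following. If a $p$-group has rank $r$ and a Sylow $p$-subgroup of $\GL_r(p)$ has exponent $p^e$, then its $\mho_{e+1}$ is powerful.
\item The underlying reason is that $\mho_e$ centralizes $N/\Phi(N)$ for every normal subgroup $N$. The induced automorphism group is a $p$-subgroup of $\GL_{d(N)}(p)$ with $d(N)\le r$.
\item For $r=d^c$ that exponent is $p^{\ceil{\log_p d^c}}=p^{\ceil{c\log_p d}}$.
\end{enumerate}
So this route produces exactly the $e$ in the statement, and it is the paper's proof.
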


    \begin{proof}
        Replacing $U$ by a minimal preimage of $C_{p^{2f+e+1}}^d$, we may assume $d(U) = d$.
        Then $U$ has subgroup rank at most $d^c$.
        A Sylow $p$-subgroup of $\GL_{d^c}(p)$ has exponent $p^e$, so \cite{LM}*{1.13 \& 4.1.13} implies that $X = \mho_{e+1}(U)$ is powerful,
        and $X \twoheadrightarrow C_{p^{2f}}^d$, so \Cref{thm:powerful} implies that $X$, and hence $U$, has a subgroup
        isomorphic to $C_{p^f}^d$.
    \end{proof}

    \begin{remark}
        Together \Cref{thm:op1,thm:class} imply that the answer to \cite{HPPS}*{Question~3.8} is negative. For example, \Cref{thm:op1} implies that there is a $2$-group $U \twoheadrightarrow C_{2^7}^2$ not containing a copy of $C_4^2$, and \Cref{thm:class} implies that any such group has nilpotency class at least $3$.
        Similarly, Luca Sabatini has observed that $\texttt{SmallGroup}(3^6, 26)$ is a class-$3$ minimal cover of $C_9^2$.
    \end{remark}

    \section*{AI usage disclosure}

    The arguments in \Cref{sec:relative} were substantially simplified with the help of OpenAI's GPT-5.6 Sol.
    The same tool was used for routine literature search, language editing, and proofreading.
    The authors take full responsibility for the contents of the manuscript.

    \bibliography{refs}
\end{document}